\def\separation{\medskip}
\def\P{I\!\!P}
\def\C{{\cal C}}
\def\?{{\bf ??}}
\def\dim{{\rm dim}}
\def\ker{{\rm ker}}
 \newtheorem{theorem}{Theorem}[section]
\newtheorem{lemma}[theorem]{Lemma} 
\newtheorem{prop}[theorem]{Proposition} 
\newtheorem{definition}[theorem]{Definition} 
\newtheorem{corollary}[theorem]{Corollary}
\newtheorem{remark}[theorem]{Remark}
\newcommand{\proof}{{\it Proof.}\ } 
\newcommand{\qed}{\hfill  $\Box$\separation}
\begin{document}

\title{A computational approach to L\"uroth quartics}
 \author{Giorgio Ottaviani\footnote{The author is member of GNSAGA-INDAM.}} 
\date{} 
 \maketitle
 \begin{abstract}A plane quartic curve is called L\"uroth
if it contains the ten vertices of a complete pentalateral.
White and Miller constructed in 1909 a covariant quartic 4-fold,
associated to any plane quartic.
We review their construction and we show how it gives a computational tool to detect if a plane quartic
is L\"uroth. As a byproduct,  the 28 bitangents of a general plane quartic correspond to  28
singular points of the associated White-Miller quartic 4-fold.
\end{abstract}
\section*{Introduction}
A {\it L\"uroth quartic} is a 
 plane quartic containing the ten vertices of a complete 
{\it pentalateral},
like in Figure 1.
\begin{figure}
    \centerline{\includegraphics[width=60mm]{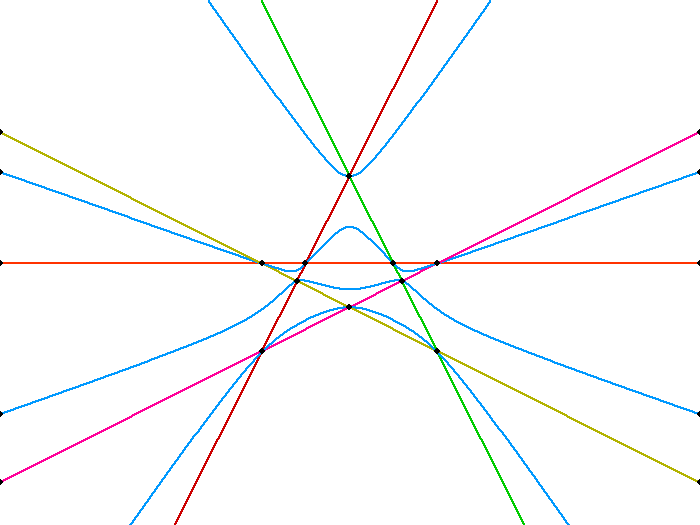}}
    \caption{A L\"uroth quartic with a pentalateral inscribed, plotted with XAlci.}
    \end{figure}
 The pentalateral is inscribed in the
quartic, and (equivalently) the quartic  circumscribes the pentalateral.
The L\"uroth quartics attracted  a lot of attention
in the classical literature, because they show  a ``poristic''
phenomenon, namely ``the fallacy of constant counting'',
to use the words of White and Miller, \cite{WM}. 

Indeed, L\"uroth was the first to observe, in 1868,  that  when a plane quartic circumscribes one pentalateral,
it also circumscribes infinitely many other pentalaterals, which is not expected by a naive dimension
count (see \cite{WM} pag. 348). 
As a consequence of L\"uroth's result, L\"uroth quartics are not dense in the space of quartics,
but they fill an open subset of a hypersurface of $\P^{14}$.
The equation of this hypersurface is called the {\it L\"uroth invariant} and,
to the best of our knowledge, it is still unknown.
This hypersurface consists entirely of quartics, so the limits (when the pentalateral degenerates) are also called nowadays L\"uroth quartics.
L\"uroth found a one dimensional
family of inscribed pentalaterals,
{\it all tangent to the same conic}. There are examples
with more  families of pentalaterals (see \S 4), and so more conics.
Each family defines a particular even theta characteristic, called {\it pentalateral theta}.
Let $\delta$ be the number of pentalateral theta on a general
L\"uroth quartic. It is a well-established fact, already known to classical geometers, that the L\"uroth invariant
has degree $54/\delta$ (see the proof of Theorem \ref{morley}).
The classical study on L\"uroth quartics culminated in 1919 in Morley's proof\cite{fM14}
that the L\"uroth invariant has degree $54$. We reviewed this lovely proof in a joint paper with E. Sernesi\cite{OS1}, and we refer to 
\cite{OS1, D, PT, Tyu} for more information, including the role
of L\"uroth quartics in vector bundle theory.
In equivalent way, Morley proved that $\delta=1$, that is that 
for the general L\"uroth quartic there is a unique
family of inscribed pentalaterals,
all tangent to the same conic, which is
 uniquely determined. We emphasized this point of view, even if it was not
focal in \cite{fM14}, because it is the starting  point of  our
current paper.

We try to answer to the following 

{\bf Question:}
given an explicit quartic curve given by a homogeneous quartic polynomial in three variables, how can one detect
if it is L\"uroth?

The answer should follow from the explicit expression of the L\"uroth invariant.
Since its expression is unclear, we have to look for alternative roads. 

For this purpose, we review a construction due to White and Miller\cite{WM}.
They construct, for any plane quartic curve $f$, a covariant quartic 4-fold
 $\mathrm{WM}_f$ in the space $\P^5$ of conics.
They proved that when $f$ is L\"uroth with a pentalateral theta corresponding to the conic $q$,
the point $q\in\P^5$ is a singular point of $\mathrm{WM}_f$. White-Miller's aim was to use
this property to characterize the L\"uroth property, with the hope that the discriminant
of $\mathrm{WM}_f$ could be a power of the L\"uroth invariant.
Unfortunately, this attempt fails, because the discriminant
of $\mathrm{WM}_f$ vanishes identically.

Indeed we prove 

\begin{theorem}\label{28bit}
For the general plane quartic $f$, $\mathrm{WM}_f$ has always $28$ singular points
corresponding to the $28$ bitangents (odd theta characteristics) of $f$.
\end{theorem}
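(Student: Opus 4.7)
My plan is to prove, for each of the 28 bitangent lines $\ell$ of $f$, that the double line $\ell^2\in\P^5$ (a point of the Veronese surface of degenerate conics) lies on $\mathrm{WM}_f$ and is in fact a singular point of it. For a general $f$ the 28 bitangents are pairwise distinct and the map $\ell\mapsto\ell^2$ is injective, yielding 28 distinct singular points of $\mathrm{WM}_f$.

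The first step is to verify $\mathrm{WM}_f(\ell^2)=0$ whenever $\ell$ is a bitangent. By the $\mathrm{SL}(3)$-covariance of the White--Miller construction, this identity can be tested after a choice of coordinates in which $\ell=x_0$ and the two contact points are $p_1=[0:1:0]$, $p_2=[0:0:1]$; the bitangency conditions cut out an 11-dimensional linear subspace of the $\P^{14}$ of quartics, on which $\mathrm{WM}_f(x_0^2)$ becomes a polynomial in the 11 free coefficients of $f$. Showing that this polynomial vanishes identically is a finite symbolic check using the explicit formula for $\mathrm{WM}_f$ recalled in the preceding section.

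The second step is to show $\ell^2$ is singular on $\mathrm{WM}_f$: all six partial derivatives $\partial\mathrm{WM}_f/\partial q_{ij}$ must vanish at $q=\ell^2$. These assemble into a ternary quadratic form that transforms covariantly under the parabolic subgroup $P\subset\mathrm{SL}(3)$ stabilizing $\ell$. Decomposing conics as a $P$-module, the vanishing of this gradient conic reduces to a short list of identities in $f$, each a polynomial consequence of the bitangency conditions at $p_1,p_2$, and each verifiable in the same coordinate-fixed setting as above. A conceptually appealing alternative is to view $\ell^2$ as a limit of pentalateral conics (when two sides of a pentalateral degenerate to $\ell$), since White--Miller's original result identifies pentalateral conics with singular points of $\mathrm{WM}_f$; the singularity of $\ell^2$ would then follow by continuity from the closure of the L\"uroth locus.

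The principal obstacle is the second step: a brute-force computation of the gradient of a degree-4 polynomial in 6 variables with coefficients depending on the 15 coefficients of $f$ is unwieldy, so exploiting $P$-equivariance to reduce the verification to a handful of scalar identities in $f$ is essential. Whether the degeneration-from-pentalateral route can be made fully rigorous without appealing to the explicit formula of $\mathrm{WM}_f$ would be the most elegant proof, but the direct covariant check via the formula is guaranteed to succeed.
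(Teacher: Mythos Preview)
Your proposal misidentifies the $28$ singular points. They are \emph{not} the double bitangent lines $\ell^2$ (rank-$1$ conics on the Veronese surface). In the paper the singular point attached to a bitangent $\ell$ is the rank-$2$ conic $Q_{\ell,f}\in\P^5$ whose two linear factors are the pencils through the two tangency points of $\ell$ on $f$; equivalently, $Q_{\ell,f}$ is characterised by $A(f,\ell^4,\ast)=Q_{\ell,f}^{\,2}$ (Prop.~\ref{abitangent} and Def.~\ref{qlf}). Since for general $f$ the singular locus of $\mathrm{WM}_f$ consists of exactly $28$ points, and the paper exhibits the $28$ points $Q_{\ell,f}$ as singular, the points $\ell^2$ you propose are not singular. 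Your ``finite symbolic check'' of $\mathrm{WM}_f(\ell^2)=0$ and of the vanishing gradient at $\ell^2$ would therefore fail if carried out.

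The paper's argument is short and conceptual rather than a brute-force verification. From $A(f,\ell^4,\ast)=Q_{\ell,f}^{\,2}$ and Prop.~\ref{recoverf} one gets $L_f^{-1}\overline{Q_{\ell,f}^{\,2}}=\overline{\ell^4}$ (up to scalar). The key observation is the apolarity $Q_{\ell,f}\cdot\ell^4=0$, which holds because the singular point of the reducible conic $Q_{\ell,f}$ lies on $\ell$. This identity is exactly what makes the bordered determinant with last row $(x)^2\overline{Q_{\ell,f}}$ vanish for all $(x)^2$, i.e.\ the gradient of $\mathrm{WM}_f$ at $Q_{\ell,f}$ is zero. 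Note that the analogous identity you would need, $\ell^2\cdot\ell^4=0$, is false, which is another way to see why $\ell^2$ cannot play this role.

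Your degeneration idea also does not work: Theorem~\ref{28bit} concerns the \emph{general} quartic $f$, which is not in the closure of the L\"uroth locus, so there is no family of L\"uroth quartics converging to $f$ along which the pentalateral conic could degenerate. Continuity from Prop.~\ref{pentasingular} therefore says nothing about $\mathrm{WM}_f$ for such $f$.
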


This theorem owes a lot to the computer experiments performed with Macaulay2\cite{GS}.
The failure of White-Miller's hope  is balanced by two
remarks, which give,in some sense, a reprise to that hope, and show that
the quartic $\mathrm{WM}_f$ is an interesting object. The first one is that Theorem \ref{28bit} gives a way to compute
the ideal of the bitangents, and
it gives at once the information about the bitangent lines and the tangency points.

The second remark is that it allows one to partially answer our question of how to detect  if a quartic is L\"uroth.
Indeed, with Macaulay2, it is possible to quickly compute the singular locus of $\mathrm{WM}_f$,
given any $f\in S^4V$.
The following outcomes are possible

$\bullet$(i) if the singular locus of $\mathrm{WM}_f$ has dimension zero and degree $28$, then $f$ {\it is not L\"uroth}.
This outcome is the general one.

$\bullet$(ii) if the singular locus of $\mathrm{WM}_f$ has dimension zero and degree $29$, the ``extra'' point can be found
by quotient over the ideal generated by the cubic hypersurface of singular conics. When it remains a single point,
it corresponds to a smooth conic. In this case, after the check
of a mild open condition (see \S \ref{algorithm}), $f$ {\it is L\"uroth and the conic corresponds to the unique pentalateral theta}.
This outcome is found for the general L\"uroth curve $f$. 
In this case the explicit equation of  all the pentalaterals inscribed can be found. 

$\bullet$(iii) if the outcome is different from (i) and (ii) a further analysis is necessary. 
The double irreducible conics, which are not L\"uroth (see the proof of Prop. 4.1 in \cite{OS2}), lie in this third category. If $f$ is a double irreducible conic,
then the singular locus of $\mathrm{WM}_f$ consists of
a surface of degree $10$, whose general point is a singular conic, plus a curve of degree $4$,
whose general point corresponds to a smooth conic .
Also the desmic quartics, which are L\"uroth and have at least
$6$ families of inscribed pentalateral (see \cite{hB14} pag. 367), lie in this third category.
When $f$ is desmic, $\mathrm{WM}_f$ is non reduced and it is a double hyperquadric in $\P^5$
(see Section \ref{appendix}).

As a consequence we get a {\it computer-aided proof  that the L\"uroth invariant has degree $54$}.
This proof is conceptually simpler than the two known proofs,
respectively by Morley\cite{fM14,OS1} and by LePotier-Tikhomirov\cite{PT}, but it cannot be concluded  without the help of a computer.

The material collected here owes a lot to the many thorough discussions
I had on the topic with Edoardo Sernesi during the preparation
of \cite{OS1} and \cite{OS2}. It is a pleasure to thank Edoardo for his insight and
 method. I thank also Igor Dolgachev and Bernd Sturmfels for their interest and many useful comments.

\section{Apolarity and the cubic invariant
of plane quartics}

Given any complex vector space $U$, we denote by $U^{\vee}$ its dual space.
Let $S^dU$ be the $d$-th symmetric power of $U$, its elements are 
homogeneous polynomials of degree $d$ in any coordinate system for $U$.
Any $f\in S^dU$ may be identified (up to scalar multiples) with its zero scheme in the projective space
$\P(U)$ of hyperplanes in $U$, so $f$ denotes also a hypersurface of degree $d$.

We recall a few facts about apolarity \cite{DK,RS}.
A polynomial $f\in S^dU$ is called {\it apolar} to a polynomial
$g\in S^eU^{\vee}$ if the contraction $f\cdot g\in S^{d-e}U$ is zero.
It is convenient to consider $g$ as a differential operator acting over $f$.
In the case $d=e$, the symmetric convention, that $f$ acts over $g$, works as well.

When $\dim U=2$, that is for polynomials over a projective line,
the apolarity is well defined for $f$, $g$ both in $S^dU$.
This is due to the canonical isomorphism $U\simeq U^{\vee}\otimes\wedge^2U$.
Let $(x_0,x_1)$ be coordinates on $U$.
If $f=(a_0x_0+a_1x_1)^d$ and $g=(b_0x_0+b_1x_1)^d$,
then the contraction between $f$ and $g$ is easily seen to be
proportional to $(a_0b_1-a_1b_0)^d$.
This computation extends by linearity to any pair $f, g\in S^dU$,
because any polynomial can be expressed as a sum of $d$-th powers.
The resulting formula for $f=\sum_{i=0}^d{d\choose i}f_ix^{d-i}y^i$ and
$g=\sum_{i=0}^d{d\choose i}g_ix^{d-i}y^i$ is that $f$ is apolar to $g$ if and only if
$$\sum_{i=0}^d(-1)^i{d\choose i}f_ig_{d-i}=0$$

In particular 
\begin{lemma}\label{divides}Let $p, l^d\in S^dU$.
$p$ is apolar to $l^d$ if and only if $l$ divides $p$.
\end{lemma}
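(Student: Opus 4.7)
The plan is to deduce the lemma directly from the explicit contraction formula recorded just above the statement, together with the fact that $S^dU$ is spanned by $d$-th powers of linear forms. So I would first reduce to rank-one tensors: write $p=\sum_j c_j m_j^d$ with $m_j\in U$, which is possible since pure powers span $S^dU$. By bilinearity of the contraction, $p\cdot l^d=\sum_j c_j\,(m_j^d\cdot l^d)$.

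Next I would invoke the computation explicitly done in the paragraphs preceding the lemma: with $m_j=a_{j,0}x_0+a_{j,1}x_1$ and $l=b_0x_0+b_1x_1$, the contraction $m_j^d\cdot l^d$ is proportional (with a universal nonzero constant depending only on $d$) to $(a_{j,0}b_1-a_{j,1}b_0)^d$. Now observe that the point $[-b_1:b_0]\in\P(U)$ is precisely the zero of $l$, and $m_j(-b_1,b_0)=-(a_{j,0}b_1-a_{j,1}b_0)$, so
\[
m_j^d\cdot l^d \;=\;c_d\,(a_{j,0}b_1-a_{j,1}b_0)^d\;=\;(-1)^d c_d\, m_j(-b_1,b_0)^d.
\]
Summing over $j$ then gives $p\cdot l^d=(-1)^d c_d\, p(-b_1,b_0)$.

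Hence $p$ is apolar to $l^d$ if and only if $p$ vanishes at the zero $[-b_1:b_0]$ of $l$ in $\P(U)\simeq\P^1$. Since $p\in S^dU$ is a binary form of degree $d$, the factor theorem yields that $p(-b_1,b_0)=0$ is equivalent to $l\mid p$, which is the desired equivalence.

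There is no real obstacle here: the content is already in the displayed contraction formula; the only thing to be careful about is the sign/scalar identification in passing from the wedge pairing $(a_{j,0}b_1-a_{j,1}b_0)$ to the value $m_j(-b_1,b_0)$, and the invocation of the Waring-type spanning of $S^dU$ by $d$-th powers of linear forms. Both are routine over $\mathbf{C}$.
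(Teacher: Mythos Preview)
Your proof is correct and is precisely the argument the paper has in mind: the lemma is stated without proof because it is meant to follow immediately from the contraction formula for $d$-th powers displayed just above, and you have simply made that implication explicit via the Waring decomposition and the factor theorem. One could shorten it slightly by plugging $g=l^d$, i.e.\ $g_j=b_0^{\,d-j}b_1^{\,j}$, directly into the coefficient formula $\sum_i(-1)^i\binom{d}{i}f_ig_{d-i}$ to obtain $(-1)^d p(-b_1,b_0)$ without first decomposing $p$ into powers, but this is the same idea.
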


A polynomial $f\in S^4U$ is called equianharmonic if its apolar to itself.
So $f$ is equianharmonic if and only if
$$f_0f_4-4f_1f_3+3f_2^2=0$$
which is the expression for the classical invariant $I$ of binary quartics.

Let $(x_0,x_1,x_2)$ be coordinates on a $3$-dimensional complex space $V$ and
$(y_0,y_1,y_2)$ be coordinates on  $V^{\vee}$. 
Let $$f(x_0,x_1,x_2)=\sum_{i+j+k=4}\frac{4!}{i!j!k!}f_{ijk}x_0^ix_1^jx_2^k\in S^4V$$
be the equation of a plane quartic curve on $\P(V)$.
All invariants of $f$ have degree which is multiple of $3$ (\cite{Do2,Stu}).
The invariant of smallest degree has degree $3$ and it corresponds to a trilinear form $A(f,g,h)$, for $f, g, h\in S^4V$.
It is defined as
{\scriptsize
$$A\left((a_0x_0+a_1x_1+a_2x_2)^4,(b_0x_0+b_1x_1+b_2x_2)^4,(c_0x_0+c_1x_1+c_2x_2)^4\right)=
\left|\begin{array}{ccc}
a_0&a_1&a_2\\
b_0&b_1&b_2\\
c_0&c_1&c_2\\
\end{array}\right|^4$$
}
This definition extends by linearity to any $f, g, h\in S^4V$.
The explicit expression of the cubic invariant $A$ can be found
at art. 293 of Salmon's book\cite{Sal}, it can be checked today e.g. with Macaulay2 (\cite{GS}) and it is the sum of the following $23$ terms
(we denote $A(f)$ for $A(f,f,f)$)
{\scriptsize
\begin{gather}
\label{cubicinvariant}
A(f)=f_{400}f_{040}f_{004}+3(f_{220}^2f_{004}+f_{202}^2f_{040}+f_{400}f_{022}^2)
+12(f_{202}f_{121}^2+f_{220}f_{112}^2+f_{022}f_{211}^2)+6f_{220}f_{202}f_{022}+\nonumber\\
-4(f_{301}f_{103}f_{040}+f_{400}f_{031}f_{013}+f_{310}f_{130}f_{004})
+4(f_{310}f_{103}f_{031}+f_{301}f_{130}f_{013})+\nonumber\\
-12(f_{202}f_{130}f_{112}+f_{220}f_{121}f_{103}+f_{211}f_{202}f_{031}+f_{301}f_{121}f_{022}+
f_{310}f_{112}f_{022}+f_{220}f_{211}f_{013}+f_{211}f_{121}f_{112})+\nonumber\\
+12(f_{310}f_{121}f_{013}+f_{211}f_{130}f_{103}+f_{301}f_{112}f_{031})
\end{gather}
}

We will see in Remark \ref{altconstruction} a more geometric way to recover the same expression.

We will need in the sequel another expression for the
cubic invariant, borrowed from \cite{WM}. Call $\overline{f}=(f_{400},\ldots,f_{004})^t$ the (column)vector of coefficients of $f$.
The trilinear form can be written in the form

\begin{equation}\label{lg}A(f,g,h)=\overline{f}^tL_{g}\overline{h}\end{equation}

where $L_g$ is a $15\times 15$ symmetric matrix with entries linear in $g$.
The matrix $L_g$ encodes all the information to express the cubic invariant. It is denoted $L$ at page 349 of \cite{WM}, 
its explicit expression  
is reported in the appendix .

Note that given $f, g\in S^4V$, the equation $A(f,g,*)=0$  defines an element in the dual space $S^4V^{\vee}$,
possibly vanishing.

\begin{prop}\label{alternativedef}
(i) $A(f,g,l^4)=0$ if and only if the restrictions $f_{|l}$, $g_{|l}$
to the line $l=0$ are apolar.

(ii) Let $A(f,g,*)=H$. We have  $A(f,g,l^4)=0$ if and only if $H(l)=0$ .
\end{prop}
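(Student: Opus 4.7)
For part (i), my plan is to reduce to the case where $f$ and $g$ are both $4$th powers of linear forms, since $S^4V$ is spanned by such powers and both sides of the claimed equivalence are bilinear in $(f,g)$. When $f = m_1^4$ and $g = m_2^4$, the restrictions $f_{|l}$ and $g_{|l}$ are precisely the binary $4$th powers $(m_1|_{l=0})^4$ and $(m_2|_{l=0})^4$. By the defining formula for $A$,
$$A(m_1^4, m_2^4, l^4) = \det(m_1, m_2, l)^4.$$
Choosing coordinates so that $l = x_0$, the $3\times 3$ determinant on the right equals the $2\times 2$ determinant of the coefficient vectors of $m_1, m_2$ in $(x_1,x_2)$, which is (up to sign) the binary apolarity pairing of the linear forms $m_1|_{x_0=0}$ and $m_2|_{x_0=0}$. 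Raising to the $4$th power then yields, up to a nonzero constant, the apolarity pairing of the binary quartics $(m_1|_{l=0})^4$ and $(m_2|_{l=0})^4$. Extending by linearity in $f$ and $g$ finishes (i).

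For part (ii), the element $H = A(f,g,*)$ is by construction a linear functional on $S^4V$, which the apolarity pairing identifies with an element of $S^4V^{\vee}$. The key input is the standard apolarity fact that for any $H \in S^4V^{\vee}$ and any linear form $l \in V$, the contraction $\langle l^4, H\rangle$ equals $H(l)$ up to a nonzero constant, where $H(l)$ denotes substituting the coefficients of $l$ into the polynomial $H$. This is just the computation of applying $H$ as a constant-coefficient differential operator of order $4$ to $l^4 = (\sum a_i x_i)^4$, which produces $4!\cdot H(a_0,a_1,a_2)$. Therefore $A(f,g,l^4) = H(l^4)$ vanishes if and only if $H(l)$ does.

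The main delicate point will be tracking the apolarity conventions carefully to verify that the proportionality constants are nonzero --- in particular, matching the determinant formula for $A$ on $4$th powers of linear forms with the binary apolarity formula for $4$th powers given in the excerpt. Once this bookkeeping is settled, both parts reduce to the conceptual observation that restriction to the line $l=0$ commutes with raising linear forms to the $4$th power, and that the $3$-variable determinant $\det(m_1,m_2,l)$ collapses, after killing the $l$-direction, to the $2$-variable determinant that governs apolarity of binary linear forms.
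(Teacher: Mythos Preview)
Your argument is correct and matches the paper's proof essentially line for line: for (i) you reduce by bilinearity to $4$th powers, choose coordinates so that $l$ is a coordinate axis, and observe that the defining $3\times 3$ determinant collapses to the $2\times 2$ determinant governing binary apolarity of the restricted linear forms; for (ii) you use the standard identity $H\cdot l^4 = H(l)$ (up to a nonzero scalar), which is exactly the paper's one-line justification. The only cosmetic difference is your choice of $l=x_0$ versus the paper's $l=x_2$, and your more explicit discussion of the proportionality constants, which the paper leaves implicit via the apolarity conventions set up earlier in the section.
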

\begin{proof} To prove (i) , consider 
$f=(\sum_{i=0}^2a_ix_i)^4$, $g=(\sum_{i=0}^2b_ix_i)^4$, $l=x_2$.
Then $$A\left(f,g,l^4\right)=
\left|\begin{array}{ccc}
a_0&a_1&a_2\\
b_0&b_1&b_2\\
0&0&1\\
\end{array}\right|^4=
\left|\begin{array}{cc}
a_0&a_1\\
b_0&b_1\\
\end{array}\right|^4=f_{|l}\cdot g_{|l}$$
This formula extends by linearity to any $f$, $g$.

(ii) follows because $H(l)=H\cdot l^4$.
\end{proof}

\begin{remark}\label{altconstruction}The part (i) of Proposition \ref{alternativedef} gives an 
alternative way to define $L_g$.
Let $l$ with equation $\sum_{i=0}^2 x_iy_i=0$.
Substitute in $f$ and $g$ the expression
$-\frac{y_1}{y_0}x_1-\frac{y_2}{y_0}x_2$ at the place of $x_0$,
after getting rid of the denominators we may define
$\tilde f(x_1,x_2)=f(-{y_1}x_1-{y_2}x_2,y_0x_1,y_0x_2)$ and 
$\tilde g(x_1,x_2)=g(-{y_1}x_1-{y_2}x_2,y_0x_1,y_0x_2)$.

Then,  the expression
\begin{equation}\label{conuzero}
{\tilde f}_{04}{\tilde g}_{40}-4{\tilde f}_{13}{\tilde g}_{31}+6{\tilde f}_{22}{\tilde g}_{22}
-4{\tilde f}_{31}{\tilde g}_{13}+{\tilde f}_{40}{\tilde g}_{04}
\end{equation}

is equivalent to $y_0^4A(f,g,l^4)$. Differentiating by the coefficients
of $l^4$ and $f$ one finds easily $L_g$.
It is interesting that the whole expression (\ref{conuzero}) is divisible by $y_0^4$, while its individual summands are not.
\end{remark}
\begin{remark}\label{l1l2} Let $l_1$, $l_2$ be two lines.
$A\left(l_1^4,l_2^4,f\right)=0$
gives the condition that $f$ passes through the intersection point
$l_1=l_2=0$.
\end{remark}

Note also from Prop. \ref{alternativedef} that $A(f,f,l^4)=0$ if and only if $f$ cuts $l$ in an
equianharmonic $4$-tuple.
The quartic curve $A(f,f,*)$ in the dual space  is called the {\it equianharmonic envelope} of $f$, and consists of all lines which are cut by $f$
in a equianharmonic $4$-tuple .

This gives the classical geometric interpretation of the cubic invariant for plane quartics.
The condition $A(f,f,f)=0$ means that $f$ is apolar with its own
equianharmonic envelope (see \cite{Cia}), note that it gives a solution to Exercise (1) in the last page
of \cite{Stu}.

\section{Clebsch and L\"uroth quartics}
A plane quartic $f\in S^4V$ is called {\it Clebsch} if it has an apolar conic,
that is if there exists a nonzero $q\in S^2V^{\vee}$ such that $q\cdot f=0$.

One defines, for any $f\in S^4V$, the catalecticant map
$C_f\colon S^2V^{\vee}\to S^2V$
which is the contraction by $f$.

The equation of the Clebsch invariant
is easily to be seen as the determinant of $C_f$, that is we have(\cite{DK}, example (2.7))

\begin{theorem}[Clebsch]
A plane quartic $f$ is Clebsch if and only if $\det C_f=0$.
The conics which are apolar to $f$ are the elements of $\ker~C_f$.
\end{theorem}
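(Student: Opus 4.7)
The plan is to observe that this theorem is essentially a restatement of the definition once one notes a dimension coincidence. By definition, $f$ is Clebsch precisely when the kernel of the contraction map $q\mapsto q\cdot f$ is nonzero; this contraction map is exactly the catalecticant $C_f\colon S^2V^\vee\to S^2V$. So the second assertion, that apolar conics form $\ker C_f$, holds by definition, with no work required.

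For the first assertion, I would remark that with $\dim V=3$ one has $\dim S^2V^\vee = \dim S^2V = 6$, so $C_f$ is a linear map between vector spaces of the same dimension. After choosing dual bases on $S^2V$ and $S^2V^\vee$ (say monomial bases paired by the apolarity contraction on conics), $C_f$ is represented by a $6\times 6$ matrix whose entries are linear in the coefficients of $f$; concretely, the $(q_1,q_2)$-entry equals the scalar $q_1\cdot(q_2\cdot f)=(q_1q_2)\cdot f$, which is symmetric in $q_1,q_2$, so $C_f$ is represented by a symmetric matrix (the classical catalecticant matrix of $f$). Since $C_f$ is a square matrix, $\ker C_f\neq 0$ if and only if $\det C_f=0$, which combined with the preceding paragraph gives the characterization of the Clebsch locus.

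There is no substantial obstacle here: the theorem is a direct consequence of the definition together with the equality $\dim S^2V^\vee=\dim S^2V$. The only point that might deserve a line of justification is the well-definedness of $\det C_f$ as a polynomial in the coefficients of $f$ that does not depend on the choice of bases (up to a nonzero scalar), which follows from the standard observation that a change of basis on $V$ multiplies $\det C_f$ by a power of the Jacobian determinant, hence does not affect vanishing. Thus the ``proof'' amounts to unpacking the definition of apolarity and invoking the basic linear-algebra fact that a square matrix has nontrivial kernel iff its determinant vanishes.
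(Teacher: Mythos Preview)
Your argument is correct and is precisely the standard one: once $C_f$ is defined as the contraction map $q\mapsto q\cdot f$, apolar conics are by definition $\ker C_f$, and since $\dim S^2V^{\vee}=\dim S^2V=6$ the kernel is nonzero iff the determinant vanishes. The paper does not give its own proof of this theorem; it states it as a classical result with a reference to \cite{DK}, example~(2.7), so there is nothing to compare against beyond noting that your unpacking of the definitions is exactly what is intended.
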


It follows (\cite{D}, Lemma 6.3.22) that the general Clebsch quartic  can be expressed
as a sum of five $4$-th powers, that is
\begin{equation}\label{clebschexpression}
f=\sum_{i=0}^4l_i^4
\end{equation}
A general Clebsch quartic $f$ can be expressed 
as a sum of five $4$-th powers in $\infty^1$
many ways. Precisely the $5$ lines $l_i$ belong to a unique
smooth conic $Q$ in the dual plane, which is apolar to $f$ and it is found  as the generator
of  $\ker~C_f$. Equivalently, the $5$ lines $l_i$ are tangent to a unique conic, which is the dual conic of $Q$.

We recall that a {\it theta characteristic} on a general plane quartic $f$ is
a line bundle $\theta$ on $f$ such that $\theta^2$ is the canonical bundle.
Hence $\deg\theta=2$. There are $64$ theta characteristic on $f$.
If the curve is general, every bitangent is tangent in two distinct points $P_1$ and $P_2$, and the divisor $P_1+P_2$
defines a theta characteristic $\theta$ such that $h^0(\theta)=1$,
these are called odd theta characteristic and there are $28$ of them.
The remaining $36$ theta characteristic $\theta$ are called even
and they satisfy $h^0(\theta)=0$.

The {\it Scorza map} is the rational map from $\P^{14}=\P(S^4V)$ to itself which associates
to  a quartic $f$ the quartic $S(f)=\{x\in\P(V)|Ar(P_x(f))=0\}$,
where $P_x(f)$ is the cubic polar to $f$ at $x$ and $Ar$ is the Aronhold invariant \cite{Sc, DK, D}.
A convenient way to write explicitly the Scorza map
is through the expression of the Aronhold invariant  in \cite{LO}, example 1.2.1.
It is well known that the Scorza map  is a $36:1$ map.
Indeed the curve $S(f)$ is equipped with an even theta characteristic.
For a general quartic curve, its $36$ inverse images through the Scorza map
give all the $36$ even theta characteristic on $S(f)$.

A {\it L\"uroth quartic} is a 
 plane quartic containing the ten vertices of a complete 
{\it pentalateral}, or the limit of such curves.

If $l_i$ for $i=0,\ldots, 4$ are the lines of the pentalateral,
we may consider them as divisors (of degree $4$) over the curve.
Then $l_0+\ldots +l_4$ consists of $10$ double points,
the meeting points of the $5$ lines.
Let $P_1+\ldots +P_{10}$ the corresponding reduced divisor of degree $10$.
Then $P_1+\ldots +P_{10}=2H+\theta$
where $H$ is the hyperplane divisor and $\theta$ is a even theta characteristic,
which is called the pentalateral theta.
The pentalateral theta was called pentagonal theta in \cite{DK},
and it coincides with \cite{D}, Definition 6.3.30 (see the comments thereafter).

The following result is classical\cite{Sc},  for a modern proof see \cite{DK, D}.

\begin{prop}\label{clebschluroth}
Let $f$ be a Clebsch quartic with apolar conic $Q$,
 then $S(f)$ is a L\"uroth quartic equipped with the pentalateral theta corresponding to $Q$.
\end{prop}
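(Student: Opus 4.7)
The plan is to exploit the explicit five-fold power-sum decomposition available for Clebsch quartics and verify the Scorza condition directly at the ten vertices of the associated pentalateral. Since $Q$ generates $\ker C_f$, I write $f = \sum_{i=0}^4 l_i^4$ as in equation (\ref{clebschexpression}), where the zero-lines of $l_0, \ldots, l_4$ form a pentalateral tangent to the conic dual to $Q$. Label the ten vertices $P_{ij} = \{l_i = l_j = 0\}$ for $0 \le i < j \le 4$; the task is to show each $P_{ij}$ belongs to $S(f)$ and then to identify the theta characteristic they produce.

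The key computation is that, up to a nonzero scalar, the first polar cubic of $f$ at any point $x \in \P(V)$ is
\[
P_x(f) \;=\; \sum_{k=0}^4 l_k(x)\, l_k^3 .
\]
At $x = P_{ij}$ the two summands with $k \in \{i,j\}$ vanish, so $P_x(f)$ becomes a sum of only three cubes of linear forms. I would then invoke the classical fact that the Aronhold locus, the hypersurface $\{Ar = 0\}$ in the space of plane cubics, is exactly the irreducible hypersurface of cubics of Waring rank at most three; this gives $Ar(P_x(f)) = 0$, which by the definition of the Scorza map means $P_{ij} \in S(f)$. Running over all ten vertices shows that $S(f)$ circumscribes the pentalateral, hence is L\"uroth.

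For the theta characteristic statement, the ten vertices cut out a reduced divisor on $S(f)$ with $\sum_{i<j} P_{ij} = 2H + \theta$ for an even $\theta$, which is by definition the pentalateral theta of the inscribed pentalateral. Since $Q$ generates $\ker C_f$ it uniquely determines the one-parameter family of such five-line decompositions of $f$ (the five lines vary tangent to a fixed conic), and every member of this family cuts out on $S(f)$ a divisor in the same linear equivalence class, so $\theta$ depends only on $Q$ and is precisely the pentalateral theta corresponding to $Q$. The step I expect to be the main obstacle is invoking the Aronhold characterization as the three-cube hypersurface: this classical input is what converts the combinatorial vanishing $l_i(P_{ij}) = l_j(P_{ij}) = 0$ into the required invariant vanishing; the rest is polar-form bookkeeping and the definition of the pentalateral theta.
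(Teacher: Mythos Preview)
Your argument is correct. The paper does not actually prove this proposition; it simply records it as classical, citing Scorza and Dolgachev--Kanev, and then quotes the explicit formula
\[
S(f)\;=\;\sum_{i=0}^4 k_i\prod_{j\neq i} l_j,\qquad k_i=\prod_{\substack{p<q<r\\ i\notin\{p,q,r\}}}|l_pl_ql_r|,
\]
from which the inscribed pentalateral is read off term by term. Your route is genuinely different: instead of computing $S(f)$ globally, you evaluate the defining condition $Ar(P_x(f))=0$ pointwise at each vertex $P_{ij}$, using that $P_x\big(\sum l_k^4\big)$ collapses to a sum of three cubes there and that the Aronhold hypersurface is exactly the locus of rank-$\le 3$ cubics. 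This is more elementary and self-contained for the purpose at hand, since it bypasses the derivation of the closed formula for $S(f)$; on the other hand, the explicit formula the paper records gives strictly more, namely the full equation of $S(f)$ in the $l_i$ (which the paper uses later, e.g.\ in Prop.~\ref{apolar4}). Your treatment of the theta characteristic is fine and matches the paper's definition of the pentalateral theta; the only mildly soft step is the claim that all pentalaterals in the one-parameter family give linearly equivalent divisors, but this follows by continuity since the class lies in the discrete set of theta characteristics.
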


The number of pentalateral theta on a general
L\"uroth quartic, called $\delta$ in the introduction, is equal to the degree
of the Scorza map when restricted to the hypersurface of Clebsch quartics.

Explicitly, if $f$ is Clebsch with equation
$$l_0^4+\ldots+l_4^4=0$$
then $S(f)$ has equation
$$\sum_{i=0}^4k_i\prod_{j\neq i}l_j=0$$
where $k_i=\prod_{p<q<r,i\notin\{p,q,r\}}|l_pl_ql_r|$
(see \cite{D}, Lemma 6.3.26)
so that $l_0,\ldots , l_4$ is a pentalateral inscribed in $S(f)$.
Note that the conic where the five lines which are the summands of $f$ are tangent,
is the same conic where the pentalateral inscribed in $S(f)$ is tangent.

The starting point of White-Miller paper \cite{WM} is the following 
remarkable characterization.

\begin{prop}\label{apolar4}
Let $f$ be a general Clebsch quartic and let $Q$ be the conic apolar to $f$. 
Let $L$ be a line.
 $L$ belongs to  $Q$ $\Longleftrightarrow$
the two binary quartic forms
$f_{|L}$ and $S(f)_{|L}$ are apolar.
\end{prop}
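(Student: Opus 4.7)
Using Proposition~\ref{alternativedef}, the condition that $f_{|L}$ and $S(f)_{|L}$ are apolar translates into the vanishing of the quartic $H(L):=A(f,S(f),L^4)$ in the dual variable $L$. The proposition is then equivalent to the identity $H=c\,Q^2$ with $c\ne0$: any quartic vanishing on the irreducible conic $Q$ is divisible by $Q$, and coincidence of zero loci forces a double factor.

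The computational core uses the Clebsch decomposition $f=\sum_{i=0}^{4}l_i^4$ (with $l_i\in Q$) and the Scorza formula $S(f)=\sum_j k_j\prod_{k\ne j}l_k$. Trilinearity gives $H(L)=\sum_{i,j}k_j\,A\bigl(l_i^4,\prod_{k\ne j}l_k,L^4\bigr)$, and by Proposition~\ref{alternativedef}(i) each summand is the binary apolarity pairing on $L$ of $(l_i|_L)^4$ with $\prod_{k\ne j}(l_k|_L)$. Apolarity against a fourth power is (up to a universal scalar) evaluation of the partner at the zero of the linear form, a direct consequence of Lemma~\ref{divides}; so each summand is proportional to $\prod_{k\ne j}l_k(L\cap l_i)$. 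The factor $l_i(L\cap l_i)=0$ kills every off-diagonal term, leaving
$$H(L)\;=\;c_0\sum_{i=0}^{4}k_i\prod_{k\ne i}l_k(L\cap l_i).$$
Setting $L=l_m$, each summand vanishes: either $i=m$ (so the vector $l_m\times l_m=0$ kills the product) or $i\ne m$ (so the factor $l_m(l_m\cap l_i)=0$ appears, since $l_m\cap l_i\in l_m$). Hence $H(l_m)=0$ for $m=0,\ldots,4$.

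To extend this to all of $Q$, I invoke the one-parameter family of Waring decompositions of a general Clebsch quartic recalled after Proposition~\ref{clebschluroth}: $f=\sum_i l_i(s)^4$ with $l_i(s)\in Q$; applying the previous step to every $s$ yields $H\equiv 0$ on $Q$, whence $H=Q\cdot R$ with $R\in S^2V^\vee$. To pin down $R\propto Q$, expand $H(l_m+\varepsilon L')$ in $\varepsilon$: the $i=m$ summand is $O(\varepsilon^4)$, while the $i\ne m$ contributions give $\varepsilon\bigl(l_m\times\sum_{i\ne m}\lambda_i l_i\bigr)\cdot L'$ with scalars $\lambda_i$ obtained by splitting $k_i$ into the three triples containing $m$ and the single triple not containing $m$. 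A direct determinant manipulation identifies $\lambda_i$, up to an $i$-independent factor, with the coefficients of the unique linear relation among the four vectors $\{l_i:i\ne m\}\subset V^\vee$; hence $\sum_{i\ne m}\lambda_i l_i=0$ and the order-$\varepsilon$ term vanishes. Each $[l_m]$ is a double zero of $H$, so $R$ vanishes at five general points of $Q$, forcing $R\propto Q$ and $H=c\,Q^2$ with $c\ne 0$ (verified by one test $L\notin Q$).

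The main obstacle is precisely the order-two refinement, specifically the $3\times3$ determinant manipulation identifying $\lambda_i$ with the linear-relation coefficients: routine but not transparent. Everything else (reduction via Proposition~\ref{alternativedef}, term-by-term evaluation via Lemma~\ref{divides}, extension from five points to all of $Q$ via the classical family) is essentially mechanical.
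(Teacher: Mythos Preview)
Your argument is correct but follows a different route from the paper's. The paper establishes $\Longrightarrow$ exactly as you do (restricting to $L=l_4$ and invoking Lemma~\ref{divides}), but for $\Longleftarrow$ it proceeds by brute force: with $f=\sum_i(\alpha_{0i}x_0+\alpha_{1i}x_1+\alpha_{2i}x_2)^4$ and $L=\{x_0=0\}$, the condition $L\in Q$ is the vanishing of a degree-$10$ determinant $D$ in the $\alpha_{ij}$, while the apolarity condition is the vanishing of a degree-$20$ polynomial $P$; knowing $D\mid P$ from the forward implication, a Macaulay2 check confirms $P=D^2$ up to scalar.

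Your approach avoids the computer verification by proving $H=cQ^2$ structurally: the family of Waring decompositions gives $Q\mid H$, and the first-order expansion at the $l_m$ forces $R\propto Q$. The trade-off is that the burden shifts to the determinant identity equating $\lambda_i=k_i\prod_{k\ne i,m}|l_k\,l_m\,l_i|$ with the Cramer coefficients $\mu_i=(-1)^i|l_{\{0,\dots,4\}\setminus\{i,m\}}|$ of the linear relation among $\{l_i:i\ne m\}$. This identity does hold: once signs are tracked, the ratio $\lambda_i/\mu_i$ reduces to the $i$-independent product $\prod_{\{a,b\}\subset\{0,\dots,4\}\setminus\{m\}}|l_a\,l_b\,l_m|$, so your flagged ``routine but not transparent'' step is in fact a two-line verification. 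The non-vanishing $c\ne 0$ is indeed immediate on a single generic $L\notin Q$ (or by noting that $H$ is a nontrivial $SL(V)$-covariant of the pair $(f,S(f))$). Your approach yields a self-contained proof; the paper's buys simplicity at the cost of a machine check.
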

\proof $\Longrightarrow$ 
Let $L=l_4$ in the expression (\ref{clebschexpression}).
Then $f_{|L}=\sum_{i=0}^3l_i^4$
and $S(f)_{|L}=\prod_{i=0}^3{l_i}$. So the result is immediate
by the Lemma \ref{divides}.

$\Longleftarrow$ The proof is an explicit computation.
Let $f=\sum_{i=0}^4(\alpha_{0i}x_0+\alpha_{1i}x_1+\alpha_{2i}x_2)^4$.
Let $x_0=0$ be the equation of $L$.
The condition that $L$ belongs to $Q$ can be expressed as the vanishing of the degree $10$ polynomial
in the $\alpha_{ij}$ given by the determinant
$$D(\ldots,\alpha_{ij},\ldots)=\left|\begin{array}{cccccc}1&0&0&0&0&0\\
\alpha_{00}^2&\alpha_{00}\alpha_{10}&&&&\alpha_{20}^2\\
\alpha_{01}^2&\alpha_{01}\alpha_{11}&&&&\alpha_{21}^2\\
\vdots&\vdots&&&&\vdots\\
\alpha_{04}^2&\alpha_{04}\alpha_{14}&&&&\alpha_{24}^2\\\end{array}\right|$$
Let $g=S(f)$, $\tilde f=f(0,x_1,x_2)$, $\tilde g=g(0,x_1,x_2)$.
The condition that $f_{|L}$ and $S(f)_{|L}$ are apolar can be expressed as the vanishing of
the degree 20 polynomial
$$P(\ldots,\alpha_{ij},\ldots)={\tilde f}_{04}{\tilde g}_{40}-4{\tilde f}_{13}{\tilde g}_{31}+6{\tilde f}_{22}{\tilde g}_{22}
-4{\tilde f}_{31}{\tilde g}_{13}+{\tilde f}_{40}{\tilde g}_{04}$$
By the previous implication, we already know that $D$ divides $P$.
An explicit computation with Macaulay2 shows that
(up to scalar multiples)  $D(\ldots,\alpha_{ij},\ldots)^2=P(\ldots,\alpha_{ij},\ldots)$.
This proves both the implications at once.
\qed

\section{The White-Miller quartic $\mathrm{WM}_f$}
\begin{prop}\label{recoverf}Assume $f, g\in S^4V$ and $\det L_g\neq 0$
(see (\ref{lg}). Let's define $H\in S^4V^{\vee}$
by $H=A(f,g,*)$.
Then $f$ can be recovered by $g$ and $H$ as the expression of the following bordered determinant
$$f(x)=\left|\begin{array}{cc}L_g&{\overline{H}}\\
(x)^4&0\\
\end{array}\right|$$
where $(x)^4$ denotes the $4$-th symmetric power of the row vector $(x_0, x_1, x_2)$
containing all terms of the form $\frac{4!}{i!j!k!}x_0^ix_1^jx_2^k$ for $i+j+k=4$.
\end{prop}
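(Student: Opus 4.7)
The plan is to recognise the statement as Cramer's rule applied to the linear relation between $\overline{f}$ and $\overline{H}$ that is encoded by the symmetric matrix $L_g$. The first step is to unwind the definition $H=A(f,g,*)$: for every linear form $l\in V$ we have $H(l)=A(f,g,l^4)$, and by (\ref{lg}) the right-hand side equals $\overline f^{\,t}L_g\,\overline{l^4}$. Expanding $\overline{l^4}$ in the chosen monomial basis and matching coefficients of $l_0^il_1^jl_2^k$ on both sides, together with the symmetry of $L_g$, yields the key algebraic identity
$$\overline H = L_g\,\overline f.$$

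Under the hypothesis $\det L_g\neq 0$, Cramer's rule inverts this system:
$$\overline f_j=\frac{\det(L_g^{(j\to\overline H)})}{\det L_g},$$
where $L_g^{(j\to\overline H)}$ is the matrix obtained from $L_g$ by replacing its $j$-th column with $\overline H$. Summing against the monomial basis and using $f(x)=(x)^4\cdot\overline f$ gives
$$\det(L_g)\,f(x)=\sum_{j=1}^{15}(x)^4_j\,\det(L_g^{(j\to\overline H)}).$$

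The final step is to identify this sum with the Laplace expansion of the $16\times 16$ bordered determinant in the statement along the last row $((x)^4,0)$. For each $j$ the corresponding cofactor is a $15\times 15$ minor whose columns are the columns of $L_g$ with the $j$-th one omitted, followed by $\overline H$ placed in last position; sliding $\overline H$ back to position $j$ contributes a sign $(-1)^{15-j}$ which combines with the Laplace sign $(-1)^{16+j}$ into a uniform overall sign. Up to this inessential (and, since we work projectively, irrelevant) overall scalar $\pm\det L_g$, the bordered determinant therefore equals $f(x)$.

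The only step requiring genuine care is the first one, namely the derivation of $\overline H=L_g\overline f$. One has to keep track of the multinomial coefficients built into the basis conventions for $(x)^4$, $\overline H$ and $\overline{l^4}$ to be sure that no hidden scalar factors spoil the identity. Everything else is linear algebra, and the hypothesis $\det L_g\neq 0$ is used precisely where it must be, namely to legitimise Cramer's rule; no further geometric input is required.
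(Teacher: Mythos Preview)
Your argument is correct and follows the same route as the paper: the paper's proof consists of the two sentences ``The hypothesis reads as $L_{g}\overline{f}=\overline{H}$. The assertion reads as $\overline{f}=L_{g}^{-1}\overline{H}$,'' and you have simply unpacked the second sentence via Cramer's rule and Laplace expansion of the bordered determinant. Your remark that the identity holds only up to the nonzero scalar $\pm\det L_g$ is appropriate, since the paper is implicitly working projectively throughout.
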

\begin{proof}
The hypothesis reads as $L_{g}\overline{f}=\overline{H}$.
The assertion reads as $\overline{f}=L_{g}^{-1}\overline{H}$.
\end{proof}
\vskip 0.5cm

\begin{prop}[White--Miller]
Let $f\in S^4V$ be a Clebsch quartic with apolar conic $Q\in S^2V^{\vee}$.
Let $g=S(f)$ be the associated L\"uroth quartic (see Prop. \ref{clebschluroth}).
Then, up to scalar multiples,  $A(f,S(f),*)=Q^2\in S^4V^{\vee}$. 
\end{prop}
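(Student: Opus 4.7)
The plan is to combine the two earlier characterizations into a purely zero-locus argument. By Proposition \ref{alternativedef}(ii), the quartic $H=A(f,S(f),*)\in S^4V^\vee$ vanishes at a line $l$ if and only if $A(f,S(f),l^4)=0$, and by Proposition \ref{alternativedef}(i) this happens precisely when $f_{|l}$ and $S(f)_{|l}$ are apolar binary quartics. Proposition \ref{apolar4} identifies that condition (for general Clebsch $f$) with the statement that $l$ belongs to the apolar conic $Q$. Hence $V(H)=V(Q)$ as subsets of the dual plane.

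Next I would observe that $H$ is not identically zero. Indeed, if $H\equiv 0$ then every line $l$ would satisfy the apolarity condition, whereas by Proposition \ref{apolar4} the lines $l$ with this property form the conic $Q$, which is a proper subset of the dual plane. (One can alternatively point to the explicit identity $D^2=P$ from the proof of Proposition \ref{apolar4}, which exhibits $H$ as a nonzero polynomial in the coefficients of $f$.)

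From here the divisibility argument is standard. For general Clebsch $f$, the apolar conic $Q$ is smooth and hence irreducible, so since $H$ vanishes along the whole of $V(Q)$, we have $Q\mid H$. Writing $H=Q\cdot R$ with $R\in S^2V^\vee$, the set $V(R)$ must lie inside $V(H)=V(Q)$; but $Q$ is an irreducible curve of degree $2$ and $R$ defines at most a degree $2$ locus, so the only possibility is $V(R)=V(Q)$, which forces $R=cQ$ for some scalar $c$. Therefore $H=cQ^2$, i.e.\ $A(f,S(f),*)=Q^2$ up to a nonzero scalar.

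The mildly delicate step is ensuring that the equality of zero loci $V(H)=V(Q)$ really forces the factorization $H=cQ^2$ and not merely $H=QR$ with $R$ a different conic through $V(Q)$; this is handled by the irreducibility of the smooth conic $Q$ as just explained. Everything else is a direct application of the two earlier propositions, so I do not expect any substantial computation beyond confirming that $H\not\equiv 0$, which semicontinuity handles once it is checked on a single Clebsch example.
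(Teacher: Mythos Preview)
Your argument is correct and is essentially the same approach the paper intends: the paper's one-line proof (``a reformulation of Prop.~\ref{apolar4} and Prop.~\ref{alternativedef}'') is exactly your reduction, and your divisibility argument from $V(H)=V(Q)$ with $Q$ a smooth conic is the natural way to spell out that reformulation. The only remark is that the identity $D^2=P$ obtained in the proof of Prop.~\ref{apolar4} already encodes the squaring directly (not just the equality of zero loci), so one could also conclude via $SL(3)$-equivariance without the Nullstellensatz/factorization step; but your route is equally valid and arguably cleaner.
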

\begin{proof}
It is a reformulation of Prop. \ref{apolar4} and Prop. \ref{alternativedef}.
\end{proof}
\vskip 0.5cm

\begin{corollary}\label{recoverclebsch}
Let $f$ be a Clebsch quartic with apolar conic $Q$ and $g=S(f)$ be L\"uroth.

(i) If $\det L_g\neq 0$, then the expression of $f$ can be recovered, up to scalar multiples, from $g$ and $Q$ by the formula
$$f=\left|\begin{array}{cc}L_g&\overline{Q^2}\\
(x)^4&0\\
\end{array}\right|$$

(ii) If $\det L_f\neq 0$, then the expression of $g$ can be recovered, up to scalar multiples, from $f$ and $Q$ by the formula
$$g=\left|\begin{array}{cc}L_f&\overline{Q^2}\\
(x)^4&0\\
\end{array}\right|$$
\end{corollary}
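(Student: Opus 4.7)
The plan is to reduce both parts directly to Proposition \ref{recoverf} via the White--Miller identity $A(f,S(f),*)=Q^2$ just established. Once we recognize the right-hand side $H$ of Proposition \ref{recoverf} as $\overline{Q^2}$, the two formulas are immediate substitutions into the bordered-determinant expression, with the trilinear symmetry of $A$ being the only thing that makes part (ii) work in addition to part (i).

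More concretely, for part (i) I would set $H := A(f,g,*)$ where $g = S(f)$. By the previous proposition this equals $Q^2$ up to a nonzero scalar. Since by hypothesis $\det L_g \neq 0$, Proposition \ref{recoverf} applies verbatim and yields
\[
f(x)\ =\ \left|\begin{array}{cc} L_g & \overline{H} \\ (x)^4 & 0 \end{array}\right|
\ =\ \left|\begin{array}{cc} L_g & \overline{Q^2} \\ (x)^4 & 0 \end{array}\right|,
\]
which is exactly (i), up to the same scalar.

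For part (ii) the key observation is that the cubic invariant $A(f,g,h)$ is symmetric in its three arguments, as is clear from its defining formula on powers of linear forms (the $4$th power of the $3\times 3$ determinant is antisymmetric in each pair of rows, but its fourth power is symmetric). Hence
\[
A(g,f,*)\ =\ A(f,g,*)\ =\ Q^2
\]
up to scalar. The defining relation \eqref{lg} then gives $L_f\,\overline{g}=\overline{Q^2}$ up to the same scalar, so under the hypothesis $\det L_f \neq 0$ we can invert and re-apply Proposition \ref{recoverf} with the roles of $f$ and $g$ interchanged to obtain the asserted bordered-determinant formula for $g$.

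There is essentially no obstacle; the only thing to double-check carefully is that the symmetry of $A$ in its three slots is compatible with the symmetry of $L_g$ as a $15\times 15$ matrix (so that $L_g \overline{f} = L_f \overline{g}$ when both sides are paired with the appropriate test vector), which is recorded in the paragraph preceding \eqref{lg} and is used implicitly in part (i) as well. Neither part requires any geometric input beyond the White--Miller identity, so the corollary is a purely linear-algebraic corollary of Propositions \ref{recoverf} and the preceding White--Miller proposition.
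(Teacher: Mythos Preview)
Your proof is correct and is exactly the argument the paper has in mind: the corollary is stated without proof in the paper precisely because it follows immediately from Proposition~\ref{recoverf} together with the White--Miller identity $A(f,S(f),*)=Q^2$, with the symmetry of $A$ handling part~(ii). There is nothing to add.
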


\begin{corollary}\label{glurothidentity}
Let  $g$ be a L\"uroth quartic with pentalateral theta corresponding to the conic $Q$.
Then the following identity holds for every 
$(x)^2=(x_0^2,\ldots, x_2^2)$
$$\left|\begin{array}{cc}L_g&\overline {Q^2}\\
(x)^2{\overline{Q}}^t&0\\
\end{array}\right|=0$$
\end{corollary}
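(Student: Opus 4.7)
The plan is to exhibit a nontrivial element in the kernel of the $16\times 16$ matrix
$$M=\left(\begin{array}{cc}L_g&\overline{Q^2}\\ (x)^2\overline{Q}^t&0\end{array}\right),$$
which is enough to force $\det M=0$. The candidate is $(\overline{f},-1)^t$, where $f$ is a Clebsch preimage of $g$ under the Scorza map whose apolar conic is $Q$. Such an $f$ exists because Prop.~\ref{clebschluroth} pairs up pentalateral thetas on $g$ with Clebsch quartics having $Q$ apolar to them; after absorbing the ``up to scalar multiples'' clause of the preceding White--Miller proposition into the normalisation of $f$, one obtains the clean identity $L_g\,\overline{f}=\overline{Q^2}$.

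With this choice, the top $15$ entries of $M(\overline{f},-1)^t$ are $L_g\overline{f}-\overline{Q^2}=0$ automatically. It remains to show that the final entry $(x)^2\overline{Q}^t\cdot\overline{f}$ vanishes for every $x$. The key step is to interpret the $1\times 15$ row $(x)^2\overline{Q}^t$ as the multinomial coefficient vector of the quartic form $(x\cdot z)^2\,Q(z)\in S^4V^\vee$ in an auxiliary variable $z$; under this identification, pairing with $\overline{f}$ becomes the apolar bracket
$$\bigl\langle (x\cdot z)^2\,Q(z),\,f(z)\bigr\rangle=\bigl\langle (x\cdot z)^2,\,Q\cdot f\bigr\rangle.$$
Because $f$ is Clebsch with apolar conic $Q$, the inner contraction $Q\cdot f$ is zero in $S^2V$ and the bracket vanishes identically in $x$. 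Thus $(\overline{f},-1)^t\in\ker M$ and $\det M\equiv 0$. For Lüroth loci where $L_g$ degenerates or the Clebsch preimage is not unique, the identity extends by continuity, since $\det M$ is polynomial in the entries of $g$, $Q$ and $x$.

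The delicate point, in my view, is matching the bottom-left symbol $(x)^2\overline{Q}^t$ with the explicit quartic $(x\cdot z)^2\,Q(z)\in S^4V^\vee$ via the same multinomial conventions that underlie the row $(x)^4$ of Cor.~\ref{recoverclebsch}. Once that bookkeeping is verified, the corollary drops out as a one-line linear-algebra consequence of the White--Miller identity $L_g\overline{f}=\overline{Q^2}$ and the defining Clebsch apolarity $Q\cdot f=0$.
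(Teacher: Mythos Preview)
Your argument is correct and is essentially the paper's own proof, just unpacked. The paper's one-liner ``Write that $f\cdot Q=0$ in Corollary~\ref{recoverclebsch}'' amounts to exactly your two ingredients: $L_g\overline{f}=\overline{Q^2}$ (the White--Miller identity behind Corollary~\ref{recoverclebsch}) and $Q\cdot f=0$ (the Clebsch apolarity), combined with the observation that the bordered determinant with bottom row $(x)^2\overline{Q}^t$ computes $Q\cdot f$; your kernel-vector formulation $(\overline{f},-1)^t\in\ker M$ is the same linear algebra viewed from the column side rather than via cofactor expansion.

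One minor remark: your continuity clause at the end is not really needed. The identity $L_g\overline{f}=\overline{Q^2}$ (after normalising the scalar) comes directly from the White--Miller proposition $A(f,S(f),*)=Q^2$ and does not require $\det L_g\neq 0$; so your kernel argument already works uniformly, whereas the paper's phrasing via Corollary~\ref{recoverclebsch} formally invokes the invertibility of $L_g$. In that sense your packaging is marginally cleaner.
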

\begin{proof}
Write that $f\cdot Q=0$ in Corollary \ref{recoverclebsch}.
\end{proof}
\vskip 0.5cm

The previous corollary gives the motivation for the following definition.
\begin{definition} For any $f\in S^4V$,
the White-Miller quartic $\mathrm{WM}_f$
is defined in $\P(S^2V)$ with coordinates $(q_0,\ldots q_5)$ by the formula
$$\mathrm{WM}_f=\left|\begin{array}{cc}L_f&(q^2)^t\\
q^2&0\\
\end{array}\right|$$
where $(q^2)$ is the vector of the $15$ cooefficients
of a double conic. Note that $\mathrm{WM}_f$  corresponds
to the unique irreducible summand isomorphic to $S^4V^{\vee}$ inside $S^4(S^2V)$.
\end{definition}

The definition is taken verbatim from \cite{WM},
it is equivalent to
$$\mathrm{WM}_f=q^2\left(\textrm{ad}L_f\right)(q^2)^t$$
where $\textrm{ad}L_f$ is the adjoint matrix of $L_f$.

\begin{prop}[White--Miller]\label{pentasingular}
Let  $f$ be a L\"uroth quartic with pentalateral theta corresponding to the conic $Q$.
Then $Q$ is a singular point of $\mathrm{WM}_f$.
\end{prop}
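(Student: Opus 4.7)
The plan is to verify the two conditions $\mathrm{WM}_g(Q) = 0$ and $d\mathrm{WM}_g|_Q = 0$ directly from the definition $\mathrm{WM}_g(q) = \overline{q^2}^t\,\mathrm{ad}(L_g)\,\overline{q^2}$, using the Clebsch partner of $g$ to exhibit an explicit vector in the right position.

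By Proposition \ref{clebschluroth}, since $g$ is L\"uroth with pentalateral theta $Q$, there is a Clebsch quartic $f$ with $S(f) = g$ and apolar conic $Q$; in particular $Q \cdot f = 0$ in $S^2V$. The White--Miller identity $A(f, S(f), *) = Q^2$ together with Proposition \ref{recoverf} then yields (up to scalar) the matrix relation $L_g\,\overline{f} = \overline{Q^2}$, so that $\mathrm{ad}(L_g)\,\overline{Q^2}$ is proportional either to $\overline{f}$ (when $\det L_g \ne 0$) or to zero (when $\det L_g = 0$, using $\mathrm{ad}(L_g)\,L_g = 0$). The degenerate case gives both sought vanishings immediately, so the substantive case is $\det L_g \ne 0$.

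In that case, $\mathrm{WM}_g(Q)$ is a nonzero scalar times $\overline{Q^2}^t\,\overline{f}$, which under the standard identification between the coefficient pairing and the apolar pairing (up to a universal multinomial factor) is proportional to $\langle Q^2, f\rangle = Q \cdot (Q \cdot f) = 0$. For the gradient I would apply the chain rule to the Veronese map $q \mapsto q^2$: its differential at $Q$ sends $\delta q$ to $2\, Q \cdot \delta q$, so the partial derivative $\partial \mathrm{WM}_g / \partial q_i$ at $Q$ is proportional to $\overline{Q \cdot m_i}^t\,\overline{f} \propto \langle Q\cdot m_i, f\rangle = m_i \cdot (Q\cdot f) = 0$ for every basis conic $m_i$. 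All partials vanish at $Q$, so $Q$ is a singular point of $\mathrm{WM}_g$.

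The only nontrivial bookkeeping is the identification of the coefficient pairing $\overline{h_1}^t\,\overline{h_2}$ with a scalar multiple of the apolar pairing $\langle h_1, h_2\rangle$ (they differ by the diagonal multinomial matrix coming from the paper's convention); once this is settled, both required vanishings collapse to the single apolarity relation $Q \cdot f = 0$.
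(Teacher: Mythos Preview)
Your argument is correct and is essentially the paper's own proof, just phrased in the adjoint-matrix language rather than in terms of bordered determinants. The paper invokes Corollary~\ref{glurothidentity}, whose content is precisely that the Clebsch partner $f$ satisfies $L_g\overline f=\overline{Q^2}$ (from the White--Miller identity) and $Q\cdot f=0$; you unwind this directly, separately checking the value and the gradient (the paper only checks the gradient, which by Euler's relation suffices). Your remark about the coefficient pairing versus the apolar pairing is well taken but resolves cleanly: since $A(f,g,h)=\overline f^{\,t}L_g\overline h$ and $A(f,g,*)=Q^2$, one has $\overline{Q^2}^{\,t}\overline h=A(f,g,h)=Q^2\cdot h$ for every $h$, so the two pairings agree on the nose here without any diagonal correction.
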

\begin{proof}
The six partial derivatives of
$$\left|\begin{array}{cc}L_f&q^2\\
q^2&0\\
\end{array}\right|$$
computed in the point $Q$ vanish if and only if 
$$\left|\begin{array}{cc}L_g&\overline{Q^2}\\
(x)^2{\overline{Q}}^t&0\\
\end{array}\right|=0$$
for every $(x)^2$. This identity holds by Corollary \ref{glurothidentity}.
\end{proof}
\vskip 0.5cm

If $\mathrm{WM}_f$ is not singular elsewhere,
then the discriminant of $\mathrm{WM}_f$ should detect
if $f$ is L\"uroth. This was the aim of White-Miller construction.
In the next section we see that there are at least other $28$ singular points
for $\mathrm{WM}_f$, so that its discriminant vanish identically.

\section{The proof of Theorem \ref{28bit} and the algorithm to detect if a quartic is L\"uroth}
\label{algorithm}
\begin{prop}\label{abitangent}
Let $f, l^4\in S^4V$ and consider  $A(f,l^4,*)\in S^4V^{\vee}$ as a quartic in $\P(V^{\vee})$.

(i)  $A(f,l^4,*)$ 
splits in four lines concurrent lines, corresponding to the four intersection points
where $f$ and $l$ meet.

(ii) $A(f,l^4,*)$ is a double conic if and only if $l$ is a bitangent to $f$.
\end{prop}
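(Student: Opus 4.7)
My plan is to turn the vanishing of $A(f,l^4,*)$ at a test line $m$ into a geometric incidence condition via Proposition \ref{alternativedef}(i) and Lemma \ref{divides}, and then to read off both parts from the resulting factorization into four concurrent lines. Specifically, for $m \in \P(V^\vee)$, Proposition \ref{alternativedef}(i) yields $A(f,l^4,m^4) = 0$ iff the binary quartics $f_{|m}$ and $(l^4)_{|m} = (l_{|m})^4$ are apolar on the line $m$; Lemma \ref{divides} then converts this to the condition that $l_{|m}$ divides $f_{|m}$, i.e., that the intersection point $l \cap m \in \P(V)$ lies on $f$.

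For part (i), I would let $p_1, p_2, p_3, p_4$ be the intersection points of $l$ with $f$ (counted with multiplicity, under the standing assumption $l \not\subset f$, which makes $A(f,l^4,*)$ not identically zero), and introduce for each $i$ the line $L_i := \{m \in \P(V^\vee) : m(p_i) = 0\}$ in the dual plane (the pencil of lines through $p_i$). The criterion above identifies the zero locus of $A(f,l^4,*)$ set-theoretically with $L_1 \cup L_2 \cup L_3 \cup L_4$, and each $L_i$ contains the point $l$ because $l(p_i) = 0$, so the four lines are concurrent at $l$. In the transverse case (all $p_i$ distinct) a degree count gives $A(f,l^4,*) = c \cdot L_1 L_2 L_3 L_4$ at once, for some nonzero scalar $c$; the general case then follows by polynomial continuity in $l$, which also forces the multiplicity of each $L_i$ in the factorization to equal the intersection multiplicity of $l$ with $f$ at $p_i$. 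This yields (i).

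For part (ii), a product $L_1 L_2 L_3 L_4$ of four lines concurrent at $l$ is a perfect square (double conic) precisely when, after reindexing, $L_1 = L_2$ and $L_3 = L_4$; since $L_i = L_j$ iff $p_i = p_j$, this is equivalent to the intersection divisor $l \cdot f$ on the line $l$ having the shape $2p + 2q$, which is the definition of $l$ being a bitangent of $f$. I expect the main obstacle to be the scheme-theoretic multiplicity check in (i): the pointwise argument only matches the reduced zero loci, so lifting it to the correct factorization with multiplicities requires either a direct local computation near each $p_i$ or the specialization/continuity argument sketched above. Once that is in hand, (ii) is essentially formal.
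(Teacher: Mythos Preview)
Your proposal is correct and follows essentially the same route as the paper. The paper's one-line proof invokes Proposition \ref{alternativedef}(ii) together with Remark \ref{l1l2} to obtain the incidence criterion ``$A(f,l^4,m^4)=0$ iff $l\cap m\in f$''; you reach the identical criterion via Proposition \ref{alternativedef}(i) combined with Lemma \ref{divides}, which is exactly how Remark \ref{l1l2} is justified in the first place, and you are simply more explicit than the paper about the multiplicity bookkeeping and the equivalence in (ii).
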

\begin{proof}
(i) follows from Prop. \ref{alternativedef} (ii) and Remark \ref{l1l2}.
(ii) is immediate from (i).
\end{proof}
\vskip 0.5cm

\begin{definition}\label{qlf}
Let $l$ be a bitangent to $f$. It defines a reducible conic $Q_{l,f}$
in $\P(V^{\vee})$ given by the two pencils through the two points of tangency.
On the algebraic side we have, from Prop. \ref{abitangent} (ii), the equivalent definition
from the identity (up to scalar multiples)
$$Q_{l,f}^2=A(f,l^4,*)$$

\end{definition}

The following Proposition proves the Theorem \ref{28bit}.

\begin{prop}
Let $f\in S^4V$ and $\det L_f\neq 0$. Let $l$ be a bitangent to $f$
and let $Q_{l,f}$ be the corresponding reducible conic of Def. \ref{qlf}.

The quartic $\mathrm{WM}_f$ is singular in $Q_{l,f}$. 
\end{prop}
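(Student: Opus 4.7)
The plan is to imitate the proof of Proposition~\ref{pentasingular}, with the bitangent power $l^4$ playing the role of $S(f)$ and $Q_{l,f}$ playing the role of the pentalateral theta conic. The key algebraic input is the analogue of the identity $\overline{Q^2}=L_f\,\overline{S(f)}$ used there, namely
$$\overline{Q_{l,f}^{\,2}} \,=\, L_f\,\overline{l^4}\qquad(\text{up to a nonzero scalar}).$$
This follows from Definition~\ref{qlf}, which asserts $Q_{l,f}^{\,2}=A(f,l^4,*)$, combined with the symmetry $L_g\,\overline{f}=L_f\,\overline{g}$ of the matrix representation (\ref{lg}) of the cubic invariant $A$, itself an immediate consequence of $A(f,g,h)=A(g,f,h)$.

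Once this identity is in hand, I would follow the computation of Proposition~\ref{pentasingular} verbatim. The six partial derivatives of $\mathrm{WM}_f(q)=q^2\,(\mathrm{ad}\,L_f)(q^2)^t$ at $q=Q_{l,f}$ all vanish if and only if, for every $(x)^2$,
$$\left|\begin{array}{cc}L_f & \overline{Q_{l,f}^{\,2}} \\ (x)^2\,\overline{Q_{l,f}}^{\,t} & 0\end{array}\right| \,=\, 0.$$
Applying the Schur complement (legal by the hypothesis $\det L_f\neq 0$) and substituting $\overline{Q_{l,f}^{\,2}}=L_f\,\overline{l^4}$, this determinant equals $-\det(L_f)\,\bigl((x)^2\,\overline{Q_{l,f}}^{\,t}\bigr)\cdot\overline{l^4}$, so the whole matter reduces to showing that this pairing vanishes identically in $(x)^2$.

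Finally I would interpret this pairing as an evaluation, in the spirit of Proposition~\ref{alternativedef}(ii): the row $(x)^2\,\overline{Q_{l,f}}^{\,t}$ is the coefficient row of the degree-$4$ polynomial $(x)^2\cdot Q_{l,f}(x)$, and its pairing with $\overline{l^4}$ is a nonzero multiple of the value $\bigl((x)^2\,Q_{l,f}\bigr)(l)=(x)^2(l)\cdot Q_{l,f}(l)$. By Definition~\ref{qlf}, $Q_{l,f}\in S^2V$ is the product of the two linear forms on $V^\vee$ cutting out the pencils through the tangency points $P_1,P_2$ of $l$; as $l$ passes through both $P_1$ and $P_2$ by definition of a bitangent, the line $l$, viewed as a point of $\P(V^\vee)$, lies on the reducible conic $Q_{l,f}$, so $Q_{l,f}(l)=0$. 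The main obstacle I anticipate is the book-keeping of the identification between coefficient-vector pairings and polynomial evaluations, but this is precisely the content of Propositions~\ref{alternativedef} and~\ref{recoverf}, so the substitution parallels the one already carried out in Corollary~\ref{glurothidentity} and presents no substantial difficulty.
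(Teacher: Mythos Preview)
Your proposal is correct and follows essentially the same route as the paper's proof. The paper also reduces to the bordered-determinant identity via Prop.~\ref{recoverf} (your Schur complement step unwound), and then kills it with the apolarity $Q_{l,f}\cdot l^4=0$, which is exactly your $Q_{l,f}(l)=0$ read through the multiplication/contraction adjunction. One small slip: the identity you attribute to Prop.~\ref{pentasingular} is $\overline{Q^2}=L_f\,\overline{h}$ with $h$ the Clebsch \emph{preimage} of the L\"uroth $f$ under Scorza, not $\overline{Q^2}=L_f\,\overline{S(f)}$; but this does not affect your argument, since the identity $\overline{Q_{l,f}^{\,2}}=L_f\,\overline{l^4}$ you actually use is derived correctly from Definition~\ref{qlf} and the symmetry of $A$.
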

\begin{proof}
From Prop. \ref{recoverf}
it follows that $l^4$ is the expression of the bordered determinant
$$\left|\begin{array}{cc}L_f&\overline{Q_{l,f}^2}\\
(x)^4&0\\
\end{array}\right|$$
 
Since $Q_{l,f}$ splits in two pencils of lines through points of $l$, we get the relation
$Q_{l,f}\cdot l^4=0$,
it follows
$$\left|\begin{array}{cc}L_f&\overline{Q_{l,f}^2}\\
(x)^2\overline{Q_{l,f}}&0\\
\end{array}\right|=0\quad\forall (x)^2$$
that implies the thesis.
\end{proof}
\vskip 0.5cm

The algorithm sketched in the introduction,
based on the analysis of the singular locus of $\mathrm{WM}_f$, works as follows.

$\bullet$(i) if the singular locus of $\mathrm{WM}_f$ has dimension zero and degree $28$, then $f$ {\it is not L\"uroth}
because otherwise the pentalateral theta should give a $29$th singular point by Prop. \ref{pentasingular}.

$\bullet$(ii) if the singular locus of $\mathrm{WM}_f$ contains a point corresponding to a smooth conic $Q$,
then by Prop. \ref{recoverf} the formula $$g=\left|\begin{array}{cc}L_f&Q^2\\
(x)^4&0\\
\end{array}\right|$$ defines a quartic which is apolar to $Q$, hence it is Clebsch.

If $\det L_g\neq 0$, from Corollary \ref{recoverclebsch} it follows that 
$$f=\left|\begin{array}{cc}L_g&Q^2\\
(x)^4&0\\
\end{array}\right|$$ and in particular $f$ coincides with $S(g)$ and it is L\"uroth.

Moreover, assume that the rank of the catalecticant $C_g$ is $5$.
Fixing a line $l_0$ which belongs to $Q$, we get that $g'=g-\alpha_0 l_0^4$ has rank $4$ for a convenient $\alpha_0\in\C$.
The kernel of the catalecticant $C_{g'}$ is generated by two conics, which meet in $4$ points,
which correspond to $l_1,\ldots, l_4$.
By a classical technique,
which goes back to Sylvester, there exist scalars $\alpha_i$ for $i=1,\ldots, 4$ such that $g=\sum_{i=0}^4\alpha_il_i^4$(\cite{RS})
and $l_i$ for $i=0,\ldots, 4$ give a pentalateral inscribed to $f$.

$\bullet$(iii) In the case where the singular locus of $\mathrm{WM}_f$
is bigger than $29$ points, or in the case (ii) when $\det L_g= 0$,
a further analysis is necessary and only partial results can be achieved by our algorithm.
Some examples in this third category, either L\"uroth or not L\"uroth,
have been analyzed at the end of the introduction.
Note that if $f$ is a desmic quartic then $\det L_f=0$ and Corollary \ref{recoverclebsch} does not apply.
More examples are in the Section \ref{computationalfacts}.

\section{Computational facts and examples}\label{computationalfacts}

\begin{theorem}[Morley]\label{morley}
The L\"uroth invariant has degree $54$.
\end{theorem}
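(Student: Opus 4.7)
The plan is to reduce the statement to the classical identity $\deg(\text{L\"uroth}) = 54/\delta$, where $\delta$ is the number of pentalateral theta on a general L\"uroth quartic, and then to prove $\delta = 1$ by an explicit Macaulay2 computation using the characterization of pentalateral theta as singular points of $\mathrm{WM}_f$ (Proposition \ref{pentasingular}).

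The identity $\deg(\text{L\"uroth})\cdot\delta = 54$ follows from three pieces of degree data for the Scorza map $S\colon\P^{14}\dashrightarrow\P^{14}$: $S$ is defined by forms of degree $4$ in the coefficients of $f$ (via $\mathrm{Ar}$ applied to the polar cubic $P_x(f)$), $S$ has generic degree $36$ (one preimage per even theta characteristic), and by Proposition \ref{clebschluroth} $S$ sends the Clebsch hypersurface (degree $6$, cut out by $\det C_f$) onto L\"uroth with restricted degree $\delta$. A careful intersection-theoretic argument on a resolution of $S$, accounting for the splitting of the Scorza Galois cover into the pentalateral orbit and the non-pentalateral orbits, combines these into the relation $\deg(\text{L\"uroth}) = 54/\delta$.

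For the computer-aided step, I construct an explicit L\"uroth quartic $f_0$ by choosing five generic linear forms $l_0,\ldots,l_4$ in $x_0,x_1,x_2$ with small integer coefficients, setting $g_0 = \sum_{i=0}^4 l_i^4$ (Clebsch) and $f_0 = S(g_0) = \sum_{i=0}^4 k_i\prod_{j\neq i}l_j$ with $k_i = \prod_{p<q<r,\,i\notin\{p,q,r\}}|l_pl_ql_r|$ (L\"uroth by Proposition \ref{clebschluroth}). I then assemble $L_{f_0}$ from the explicit $15\times 15$ expression in the appendix, form $\mathrm{WM}_{f_0} = q^2\,\mathrm{ad}(L_{f_0})\,(q^2)^t$ as a quartic in $\P(S^2V) = \P^5$, and compute its singular scheme in Macaulay2. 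The expected output is a zero-dimensional scheme of degree exactly $29$: by Theorem \ref{28bit}, $28$ of these points are the reducible conics $Q_{l,f_0}$ from the bitangents of $f_0$, and the unique remaining point is a smooth conic, necessarily the pentalateral theta by Proposition \ref{pentasingular}. This places $f_0$ in outcome (ii) of the algorithm of Section \ref{algorithm}; since outcome (ii) is an open condition on the irreducible variety L\"uroth and our verification on $f_0$ exhibits it, it holds on a dense open subset, so the generic L\"uroth has $\delta = 1$. Combined with the first step, $\deg(\text{L\"uroth}) = 54$.

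The main obstacle is ensuring the output is clean, i.e.\ that $f_0$ genuinely sits in outcome (ii) and not the degenerate outcome (iii). This requires $f_0$ to avoid the desmic and double-conic loci and to satisfy $\det L_{g_0}\ne 0$ so that Corollary \ref{recoverclebsch} applies; a sufficiently generic integer choice of $l_0,\ldots,l_4$ guarantees these open conditions, but the verification that the $29$th singular point is truly a smooth conic (rather than a special reducible configuration) must be carried out in the computer algebra system. A secondary subtlety is the intersection-theoretic bookkeeping needed to derive $\deg(\text{L\"uroth}) = 54/\delta$ rigorously on a resolution of the Scorza map, but this portion is classical and can be found in \cite{OS1}.
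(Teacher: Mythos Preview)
Your proposal is correct and follows essentially the same approach as the paper: derive the relation $L\delta=54$ from the degree data of the Scorza map (entries of degree $4$, generic degree $36$, Clebsch invariant of degree $6$), then verify $\delta=1$ by a Macaulay2 computation showing that $\mathrm{WM}_{f_0}$ has exactly $29$ singular points for one explicit L\"uroth quartic, and conclude by semicontinuity. The paper carries this out with the specific choice $l_0=x_0$, $l_1=x_1$, $l_2=x_2$, $l_3=x_0+x_1+x_2$, $l_4=x_0+2x_1+3x_2$, which is precisely an instance of the construction you describe.
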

\begin{proof}

The Scorza map is a rational map from $\P^{14}$ to $\P^{14}$
with entries of degree $4$ in the $15$ homogeneous coefficients of a quartic $f$.
We  call $L$ the degree of the L\"uroth invariant.
From the discussion after Prop. \ref{clebschluroth}, 
since the Clebsch invariant has degree $6$ we get the well known equality

$$\frac{4L}{36/\delta}=6$$

hence $54=\delta L$.
This equality coincides with the equation (2.5) in \cite{Tyu} specialized to $r=5$, $d=4$, where 
$s_4(5)=54$(see \cite{Tyu}\S 5), $\deg p_C$ is our $\delta$
 and $p_C(\mathrm{MP}^4_5)$ is the L\"uroth hypersurface in $\P^{14}$.
 
If $$f=x_0x_1x_2(x_0+x_1+x_2)+(x_0+2x_1+3x_2)\left(x_0x_1x_2+(x_0x_1+x_0x_2+x_1x_2)(x_0+x_1+x_2)\right)$$
then a Macaulay2 computation says that$\textrm{WM}_f$ has exactly $29$ singular points.
This implies that the L\"uroth quartic $f$ has a unique pentalateral theta,
corresponding to the conic tangent to the pentalateral
given by $l_0=x_0$, $l_1=x_1$, $l_2=x_2$, $l_3=x_0+x_1+x_2$, $l_4=x_0+2x_1+3x_2$.
By semicontinuity, the general L\"uroth quartic curve has a unique pentalateral theta. 
This implies $\delta=1$ and the theorem follows. 
\end{proof}
\vskip 0.5cm

In the example
$f=x_2^2(x_0^2+x_1^2)+x_2(x_0^3+x_1^3)-x_0^3x_1+(1/2)x_0^2x_1^2-x_0x_1^3$,
found in \cite{PT} section 9.3 (put $c=-\frac{1}{4}$),
the singular locus of $\mathrm{WM}_f$ consists of
a conic, whose general point corresponds to a singular conic, plus $8$ points,
 each one corresponding to a smooth conic. This example has
$8$ distinct pentalateral theta.

In the example
$f=25(x_0^4+x_1^4+x_2^4)-34(x_0^2x_1^2+x_0^2x_2^2+x_1^2x_2^2)$,
considered in \cite{PSV}, and previously by Edge,
the singular locus of $\mathrm{WM}_f$ consists of
$40$ points, consisting of $28$ points giving the bitangents plus $12$ points,
 each one corresponding to a smooth conic. This example has
$12$ distinct pentalateral theta.

The singular L\"uroth quartics form two irreducible components $L_1$ and $L_2$ (\cite{OS2}).
The White-Miller quartic of a singular L\"uroth quartic in $L_2$ (with the pentalateral having three concurrent lines)
is singular along a conic in $\P^5$ corresponding to reducible conics and at seven points,
each one corresponding to a smooth conic. 

According to \cite{hB14}, pag. 368, desmic quartics have equation
$f(x_0,x_1,x_2)=\left((a^2-2mbc)x_0+(b^2-2mca)x_1+(c^2-2mab)x_2\right)x_0x_1x_2-m^2(a^2x_0^4+b^2x_1^4+c^2x_2^4-2bcx_1^2x_2^2-2cax_0^2x_2^2-2abx_0^2x_1^2)$
with parameters $a, b, c, m$.
An explicit computation shows that $\textrm{rk\ } L_f=14$,
hence $\textrm{rk\ }\textrm{ad }L_f=1$ and $\mathrm{WM}_f$ is a double (hyper)quadric.

Klein quartic $f(x_0,x_1,x_2)=x_0^3x_1+x_1^3x_2+x_2^3x_0$ is not L\"uroth,
the singular locus of $\mathrm{WM}_f$ is just given by $28$ points, as in the general case.

For the same reason, the following curve, with a cusp in $(0,0,1)$,
is not L\"uroth
$f(x_0,x_1,x_2)=x_0^4+2x_1^4-34x_0^2x_1^2+(x_0^2x_2^2+x_1^2x_2^2+2x_0x_1x_2^2)+41x_0^3x_1+51x_1^3x_2+21x_0x_1^3-11x_1x_2^3$.

If $f$ is a  Caporali quartic (sum of four $4$-th powers) we have that 
$\textrm{rk\ } L_f=14$ and
 $\mathrm{WM}_f$ is a double (hyper)quadric.

\section{Appendix: the matrix $L_g$ of formula (\ref{lg})}\label{appendix}

To print the matrix $L_g$ is convenient to use the notation

{\tiny
$$g=x^{4} {g}_{0}+x^{3} y {g}_{1}+x^{3} z {g}_{2}+x^{2} y^{2} {g}_{3}+x^{2}
      y z {g}_{4}+x^{2} z^{2} {g}_{5}+x y^{3} {g}_{6}+x y^{2} z {g}_{7}+x y
      z^{2} {g}_{8}+x z^{3} {g}_{9}+y^{4} {g}_{10}+y^{3} z {g}_{11}+y^{2} z^{2}
      {g}_{12}+y z^{3} {g}_{13}+z^{4} {g}_{14}$$}

and we get

{\tiny
$$
L_g=\bgroup\makeatletter\c@MaxMatrixCols=15\makeatother\begin{pmatrix}0&
      0&
      0&
      0&
      0&
      0&
      0&
      0&
      0&
      0&
      144 {g}_{14}&
      {-36 {g}_{13}}&
      24 {g}_{12}&
      {-36 {g}_{11}}&
      144 {g}_{10}\\
      0&
      0&
      0&
      0&
      0&
      0&
      {-36 {g}_{14}}&
      9 {g}_{13}&
      {-6 {g}_{12}}&
      9 {g}_{11}&
      0&
      9 {g}_{9}&
      {-6 {g}_{8}}&
      9 {g}_{7}&
      {-36 {g}_{6}}\\
      0&
      0&
      0&
      0&
      0&
      0&
      9 {g}_{13}&
      {-6 {g}_{12}}&
      9 {g}_{11}&
      {-36 {g}_{10}}&
      {-36 {g}_{9}}&
      9 {g}_{8}&
      {-6 {g}_{7}}&
      9 {g}_{6}&
      0\\
      0&
      0&
      0&
      24 {g}_{14}&
      {-6 {g}_{13}}&
      4 {g}_{12}&
      0&
      {-6 {g}_{9}}&
      4 {g}_{8}&
      {-6 {g}_{7}}&
      0&
      0&
      4 {g}_{5}&
      {-6 {g}_{4}}&
      24 {g}_{3}\\
      0&
      0&
      0&
      {-6 {g}_{13}}&
      4 {g}_{12}&
      {-6 {g}_{11}}&
      9 {g}_{9}&
      {-{g}_{8}}&
      {-{g}_{7}}&
      9 {g}_{6}&
      0&
      {-6 {g}_{5}}&
      4 {g}_{4}&
      {-6 {g}_{3}}&
      0\\
      0&
      0&
      0&
      4 {g}_{12}&
      {-6 {g}_{11}}&
      24 {g}_{10}&
      {-6 {g}_{8}}&
      4 {g}_{7}&
      {-6 {g}_{6}}&
      0&
      24 {g}_{5}&
      {-6 {g}_{4}}&
      4 {g}_{3}&
      0&
      0\\
      0&
      {-36 {g}_{14}}&
      9 {g}_{13}&
      0&
      9 {g}_{9}&
      {-6 {g}_{8}}&
      0&
      0&
      {-6 {g}_{5}}&
      9 {g}_{4}&
      0&
      0&
      0&
      9 {g}_{2}&
      {-36 {g}_{1}}\\
      0&
      9 {g}_{13}&
      {-6 {g}_{12}}&
      {-6 {g}_{9}}&
      {-{g}_{8}}&
      4 {g}_{7}&
      0&
      4 {g}_{5}&
      {-{g}_{4}}&
      {-6 {g}_{3}}&
      0&
      0&
      {-6 {g}_{2}}&
      9 {g}_{1}&
      0\\
      0&
      {-6 {g}_{12}}&
      9 {g}_{11}&
      4 {g}_{8}&
      {-{g}_{7}}&
      {-6 {g}_{6}}&
      {-6 {g}_{5}}&
      {-{g}_{4}}&
      4 {g}_{3}&
      0&
      0&
      9 {g}_{2}&
      {-6 {g}_{1}}&
      0&
      0\\
      0&
      9 {g}_{11}&
      {-36 {g}_{10}}&
      {-6 {g}_{7}}&
      9 {g}_{6}&
      0&
      9 {g}_{4}&
      {-6 {g}_{3}}&
      0&
      0&
      {-36 {g}_{2}}&
      9 {g}_{1}&
      0&
      0&
      0\\
      144 {g}_{14}&
      0&
      {-36 {g}_{9}}&
      0&
      0&
      24 {g}_{5}&
      0&
      0&
      0&
      {-36 {g}_{2}}&
      0&
      0&
      0&
      0&
      144 {g}_{0}\\
      {-36 {g}_{13}}&
      9 {g}_{9}&
      9 {g}_{8}&
      0&
      {-6 {g}_{5}}&
      {-6 {g}_{4}}&
      0&
      0&
      9 {g}_{2}&
      9 {g}_{1}&
      0&
      0&
      0&
      {-36 {g}_{0}}&
      0\\
      24 {g}_{12}&
      {-6 {g}_{8}}&
      {-6 {g}_{7}}&
      4 {g}_{5}&
      4 {g}_{4}&
      4 {g}_{3}&
      0&
      {-6 {g}_{2}}&
      {-6 {g}_{1}}&
      0&
      0&
      0&
      24 {g}_{0}&
      0&
      0\\
      {-36 {g}_{11}}&
      9 {g}_{7}&
      9 {g}_{6}&
      {-6 {g}_{4}}&
      {-6 {g}_{3}}&
      0&
      9 {g}_{2}&
      9 {g}_{1}&
      0&
      0&
      0&
      {-36 {g}_{0}}&
      0&
      0&
      0\\
      144 {g}_{10}&
      {-36 {g}_{6}}&
      0&
      24 {g}_{3}&
      0&
      0&
      {-36 {g}_{1}}&
      0&
      0&
      0&
      144 {g}_{0}&
      0&
      0&
      0&
      0\\
      \end{pmatrix}\egroup$$
}

 \noindent
  \textsc{g. ottaviani} -
  Dipartimento di Matematica ``U. Dini'', Universit\`a di Firenze, viale Morgagni 67/A, 50134 Firenze (Italy). e-mail: \texttt{ottavian@math.unifi.it}
  
  \separation
  \noindent


\begin{thebibliography}{999}
  
 \bibitem{Bar} W. Barth:  Moduli of vector bundles on the projective plane , \emph{Invent. Math.} {\bf 42} (1977), 63--92.
  
  \bibitem{hB14}
  H. Bateman: The Quartic Curve and its Inscribed Configurations,  \emph{American J. of Math.} 36 (1914), 357--386. 
  
\bibitem{Cia}E. Ciani, Le curve piane di quarto ordine, {\em Giornale di Matematiche}, 48 (1910),
259--304.

 \bibitem{D} I. Dolgachev: \emph{Classical Algebraic Geometry, a modern view} ,
to appear in Cambridge Univ. Press.

\bibitem{Do2} 
I. Dolgachev: \emph{Lectures on invariant theory,} London Math. Soc. Lecture Note Series, 296, Cambridge Univ. Press, Cambridge, 2003.

\bibitem{DK}
I. Dolgachev, V. Kanev: Polar covariants of plane cubics and quartics, \emph{Advances in Math.} 98 (1993), 216--301.
 
\bibitem{GS} 
D.~Grayson, M.~Stillman,
{\sc Macaulay 2:} {\em a software system for research in algebraic
geometry}. Available at {\tt http://www.math.uiuc.edu/Macaulay2}.

\bibitem{LO}J.M. Landsberg, G. Ottaviani,  Equations for secant varieties of Veronese and other varieties,
 to appear in {\em Annali di Matematica Pura e Applicata}, arXiv:1111.4567. 

\bibitem{PT} J. Le Potier, A. Tikhomirov,  Sur le morphisme de Barth,
\emph{Ann. Sci. \'Ecole Norm. Sup.} (4) {\bf 34} (2001), no. 4, 573--629.

\bibitem{jL68}
J. L\"uroth: Einige Eigenshaften einer gewissen Gathung von Curves vierten Ordnung, 
\emph{Math. Annalen} 1 (1868), 38--53.
  
  \bibitem{fM14}
  F. Morley: On the L\"uroth Quartic Curve,   \emph{American J. of Math.} 41 (1919), 279--282. 
  
 \bibitem{OS1} G. Ottaviani, E. Sernesi, On the hypersurface of L\"uroth quartics, {\em Michigan Math. J.} 59 (2010), 365–-394.

 \bibitem{OS2} G. Ottaviani, E. Sernesi, On singular L\"uroth curves.
{\em Science China Mathematics} Vol. 54 No. 8: 1757–-1766, 2011, (Catanese 60th volume).

\bibitem{PSV} D. Plaumann, B. Sturmfels, C. Vinzant, Quartic curves and their bitangents,  {\em Journal of Symbolic Computation}, 46 (2011) 712--733. 

\bibitem{RS} K. Ranestad, F.O. Schreyer, Varieties of sumes of powers,
{\em Journal f\"ur die reine und angew. Math.}, 525, (2000), 147--181. 

\bibitem{Sal} G. Salmon, {\em A treatise on higher plane curves},
 Hodges and Smith, Dublin, 1852
(reprinted from the 3d edition by Chelsea Publ. Co., New York, 1960).

\bibitem{Sc} G. Scorza, Un nuovo teorema sopra le quartiche piane generali, {\em Math. Ann.}
52, (1899), 457--461.

\bibitem{Stu} B. Sturmfels, {\em Algorithms in Invariant Theory}, Springer, Wien

\bibitem{Sc} G. Scorza, Un nuovo teorema sopra le quartiche piane generali, {\em Math. Ann.}
52, (1899), 457--461.

\bibitem{Stu} B. Sturmfels, {\em Algorithms in Invariant Theory}, Springer, Wien, 1993.

\bibitem{Tyu} A. Tyurin, {\em The classical geometry of vector bundles}, Algebraic geometry (Ankara, 1995), Lecture Notes in Pure and Appl. Math.  193, p.347--378, Dekker,  New York, 1997.

\bibitem{WM} H.S. White, K. Miller, Note on L\"uroth's type of plane quartic 
curves, {\em Bull. AMS}, 15, 7 (1909), 347--352.


  \end{thebibliography}
\end{document}